\newcommand{\numberset}{\mathbb}
\newcommand{\Z}{\numberset{Z}}
\newcommand{\R}{\numberset{R}}
\newcommand{\C}{\numberset{C}}
\newcommand{\blue}{\textcolor{blue}}
\newcommand{\GG}{ {\mathbb{G}}}
\newcommand{\JJ}{\mathds{J}}
\newcommand{\Hy}{\mathds{H}}
\newcommand{\phee}{\varphi}
\theoremstyle{definition}
\newtheorem{Definition}{Definition}[section]
\newtheorem*{Theorem*}{Theorem}
\newtheorem{Example}{Example}
\newtheorem{Remark}[Definition]{Remark}
\newtheorem{Proposition}[Definition]{Proposition}
\newtheorem{Prop}[Definition]{Proposition}
\newtheorem{Theorem}[Definition]{Theorem}
\theoremstyle{plain}
\newcommand{\PSL}{ {PSL(2,\C)} }
\newcommand{\CP}{\mathbb{C}\mathds{P}}
\newcommand{\SL}{ {SL(2,\C)} }
\newcommand{\asl}{\mathfrak{sl}(2,\C)}
\newcommand{\inner}[1] {\langle #1 \rangle}
\newcommand{\Proj}{\mathds{P}}
\newcommand{\GGG}{{\mathbb G}}
\newcommand{\inners}{\inner{\cdot,\cdot}}
\title{On $\PSL$ and on the space of geodesics of $\Hy^3$ as Riemannian holomorphic manifolds}
\author{CHRISTIAN EL EMAM}
\begin{document}
	\maketitle
	
	\begin{abstract}
	We discuss some geometric aspects of $\PSL$, $\SL$, and the space $\GG$ of the geodesics of $\Hy^3$ equipped with some suitable structures of Riemannian holomorphic manifolds of constant sectional curvature. We also observe that $\GG$ is a symmetric space for the group $\PSL$ and use it to deduce some correlations between their holomorphic Riemannian metrics.
	\end{abstract}

	\section*{Introduction}

	We examine some features of $\PSL$ and $\GG$, the space of geodesics of $\Hy^3$, equipped with some natural holomorphic Riemannian structures. The main reference for this paper is \cite{Articolone}.
	
	Holomorphic Riemannian metrics can be seen as the analogue of Riemannian metrics in the setting of complex manifolds and have been widely studied (e.g. see the works by LeBrun, Dumitrescu, Zeghib, Biswas as in  \cite{holomorphic riemannian 2}, \cite{holomorphic riemannian 3}, \cite{holomorphic riemannian 1}, \cite{holomorphic riemannian 4}). 
	
	We will recall the definition and some basic aspects of Holomorphic Remannian Geometry in Section \ref{Section HRM}.   
	 In Section \ref{Section PSL} we study $\PSL$ and $\SL$ equipped with their global Killing form as complex Lie groups, which is in fact a holomorphic Riemannian metric, while in Section \ref{Section G} we introduce a holomorphic Riemannian structure on $\GG$ over which the natural action of $\PSL$ is by isometries. 
	 
	 In Section \ref{HR space forms} we remark that both $\SL$ and $\GG$ can be seen as holomorphic Riemannian space forms, respectively in dimension three and two, and discuss some general topics of existence and uniqueness of holomorphic Riemannian space forms.
	
	Finally, in Section \ref{Section symm spaces} we regard $\Hy^3$ and $\GG$ as $\PSL$-symmetric spaces and show that this approach allows to find some correlations among their metrics.

	\section{Holomorphic Riemannian metrics}
	\label{Section HRM}
	Let us start by recalling the notion of holomorphic Riemannian metric and some generalities.

	Let $\mathbb M$ be a complex analytic manifold with complex structure $\JJ$, let $n=dim_\C \mathbb M$ and denote with $T\mathbb M\to \mathbb M$ its tangent bundle.
	
	\begin{Definition}
		A \emph{holomorphic Riemannian metric} (also \emph{hRm}) on $\mathbb M$ is a symmetric $(0,2)$-tensor $\inner{\cdot, \cdot}$ on $T\mathbb M\to \mathbb M$, i.e. a section of $Sym^2 (T^*\mathbb M)$, such that: 
		\begin{itemize}
			\item $\langle \cdot, \cdot \rangle$ is $\C$-bilinear, i.e. for all $X,Y \in \Gamma (T\mathbb M)$ we have $\langle \JJ X, Y \rangle= \langle X, \JJ Y\rangle =i \langle X, Y\rangle$;
			\item for all $p\in \mathbb M$, $\inners_p$ is a non-degenerate complex bilinear form;
			\item for all $X, Y$ local holomorphic vector fields, $\inner{X, Y}$ is a holomorphic function.
		\end{itemize}
	Also denote $\|X \|^2:= \inner{X,X}$.
	
	A \emph{holomorphic Riemannian manifold} is a complex manifold equipped with a holomorphic Riemannian metric. 
	\end{Definition}
	
	In a sense, holomorphic Riemannian metrics naturally generalize the notions of Riemannian and pseudo-Riemannian metric in the complex setting. 
	
	Observe that both the real part $Re\inner{\cdot, \cdot}$ and the imaginary part $Im\inner{\cdot, \cdot}$ of a hRm $\inners$  are pseudo-Riemannian metrics on $\mathbb M$ with signature $(n,n)$.

	\begin{Example}
		\label{Esempio1}
		 The simplest example of hRm is given by $\C^n$ equipped with the standard symmetric inner product for vectors: namely, in the usual identification $T\C^n =\C^n\times \C^n$, 
	\[\inner{\underline v, \underline w}_{\C^n}= \sum_{i=1}^n v_iw_i.
	\]
\end{Example}
\begin{Example}
	\label{Esempio2}
Consider the complex submanifold
\[\mathds{X}_n=\{\underline z\in \C^{n+1}\ |\ \sum_i z_i^2=-1 \}\subset \C^{n+1}.\]

The restriction to $\mathds X_n$ of the metric $\inners_0$ of $\C^{n+1}$ defines a holomorphic Riemannian metric. Indeed,
\[
T_{\underline w} \mathds{X}_n = {\underline w}^\bot= \{\underline z\in \C^{n+1}\ |\ \inner{\underline w,\underline z}_{\C^{n+1}}=0\} 
\]
and the restriction of the inner product to $\underline w^\bot$ is non degenerate since $<\underline w,\underline w>_{\C^{n+1}}\ne 0$; moreover, since $\mathds X_n\subset \C^{n+1}$ is a complex submanifold, local holomorphic vector fields on $\mathds X_n$ extend to local holomorphic vector fields on $\C^{n+1}$, proving that the inherited metric is in fact holomorphic.
\end{Example}
\begin{Example}
	\label{Esempio3}
 Let $G$ be a complex semisimple Lie group with unit $e$. 

Consider on $T_e G\cong Lie(G)$ the \emph{Killing form} $Kill\colon T_e G \times T_e G \to \C$, defined as $Kill(u,v):= tr(ad(u)\circ ad(v) )$. 

By standard Lie Theory, the Killing form is $\C$-bilinear and symmetric; $Kill$ is also \emph{$Ad$-invariant}, i.e. for all $g\in G$ \[Kill(Ad(g) \cdot, Ad(g) \cdot)=Kill\], and \emph{$ad$-invariant}, i.e. for all $V\in Lie(G)$ \[Kill(ad(V)\cdot, \cdot)+ Kill(\cdot, ad(V)\cdot)=0.\] Furthermore, being $G$ semisimple, $Kill$ is non degenerate. 

For all $g\in G$ one can push-forward $Kill$ via $L_g$ to define a non degenerate $\C$-bilinear form on $T_g G$, namely 
\[Kill_g (U,V):= \Big((L_g)_* Kill \Big)(U,V)=  Kill\bigg( \big(d_g (L_g^{-1})\big)(U), \big(d_g (L_g^{-1})\big)(V) \bigg)\]
for all $U,V\in T_g G$.
By $Ad$-invariance, the analogous bilinear form $(R_g)_* Kill$ is such that $(R_g)_* Kill=(L_g)_* Kill$.

This defines globally a nowhere-degenerate section $Kill_{\bullet} \in \Gamma\big(Sym^2(T^*G)\big)$ such that, for all $X,Y$ left-invariant vector fields, hence holomorphic vector fields (e.g. see \cite{complex Lie groups} \S1), $Kill_{\bullet}(X,Y)$ is constant, hence a holomorphic function. Since any other holomorphic vector field can be seen as a combination of left-invariant vector fields with holomorphic coefficients, we conclude that $Kill_{\bullet}$ is a holomorphic Riemannian metric.
\end{Example}

\vspace{3mm}

	Drawing inspiration from (Pseudo-)Riemannian Geometry, one can define some constructions associated to a holomorphic Riemannian metric, such as a Levi-Civita connection - leading to notions of a curvature tensor, (complex) geodesics and completeness - and sectional curvatures. We recall some generalities, the reader may refer to \cite{holomorphic riemannian 4} for a more detailed treatment.

	\begin{Proposition}
		Given a holomorphic Riemannian metric $\inner{\cdot, \cdot}$ on $\mathbb M$, there exists a unique connection 
	\begin{equation*}
	\begin{split}
	D \colon \Gamma (T\mathbb M) &\to \Gamma(Hom_{\R} (T\mathbb M, T\mathbb M))\\
	Y &\mapsto D Y (\colon X \mapsto D_X Y),
	\end{split}
	\end{equation*}
		that we will call \emph{Levi-Civita connection}, such that for all $X,Y \in \Gamma(TM)$ the following conditions hold:
		\begin{align}
		\label{compatibilita con metrica}
		d \inner{X, Y} &= \inner{D X, Y}+ \inner{X, D Y} \qquad && \text{($D$ is compatible with the metric)};\\
		\label{torsion free}
		[X,Y]&= D_X Y - D_Y X \qquad &&\text{($D$ is torsion free)}.
		\end{align}
		Such connection coincides with the Levi-Civita connections of $Re\inner{\cdot, \cdot}$ and $Im\inner{\cdot, \cdot}$ and $D\JJ=0$.
	\end{Proposition}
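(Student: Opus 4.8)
The plan is to mimic the classical proof of the fundamental theorem of Riemannian geometry, exploiting the fact that the holomorphic metric $\inners$ is a complex-bilinear extension of data that can be described in real terms. First I would observe that, since $\inners$ is $\C$-bilinear and non-degenerate at every point, both $g_1 := Re\inner{\cdot,\cdot}$ and $g_2 := Im\inner{\cdot,\cdot}$ are (real, smooth, non-degenerate) pseudo-Riemannian metrics on $\mathbb M$ regarded as a real $2n$-manifold, each of signature $(n,n)$; moreover $g_2(X,Y) = g_1(\JJ X, Y)$ because $\inner{\JJ X, Y} = i\inner{X,Y}$. The existence and uniqueness of the Levi-Civita connection $D^{(1)}$ of $g_1$ is the classical theorem, given by the Koszul formula
\begin{equation*}
2 g_1(D^{(1)}_X Y, Z) = X g_1(Y,Z) + Y g_1(X,Z) - Z g_1(X,Y) + g_1([X,Y],Z) - g_1([X,Z],Y) - g_1([Y,Z],X).
\end{equation*}
This produces a unique torsion-free connection compatible with $g_1$. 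The whole proof then reduces to showing that this same $D^{(1)}$ is also compatible with $\inners$ in the $\C$-bilinear sense of \eqref{compatibilita con metrica}, which by splitting into real and imaginary parts amounts to: $D^{(1)}$ is compatible with $g_2$, and $D^{(1)}\JJ = 0$.

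The key step, and the one I expect to be the main obstacle, is establishing $D^{(1)}\JJ = 0$, i.e. that the complex structure is parallel for the Levi-Civita connection of $g_1$. In general this fails — it is exactly the pseudo-Kähler condition — so one must genuinely use that $\inners$ is \emph{holomorphic}, not merely that $g_1$ is compatible with $\JJ$ in the weak sense above. The cleanest route is to work in local holomorphic coordinates $(z^1,\dots,z^n)$: writing $\inner{\partial_{z^i},\partial_{z^j}} = h_{ij}(z)$ with $h_{ij}$ holomorphic, the holomorphicity means $\partial_{\bar z^k} h_{ij} = 0$. One then \emph{defines} a candidate connection $D$ by the holomorphic Christoffel symbols $\Gamma^k_{ij} = \frac12 h^{k\ell}(\partial_{z^i} h_{j\ell} + \partial_{z^j} h_{i\ell} - \partial_{z^\ell} h_{ij})$, declaring $D_{\partial_{z^i}}\partial_{z^j} = \Gamma^k_{ij}\partial_{z^k}$ and $D_{\partial_{\bar z^i}}\partial_{z^j} = 0$, extended to all real vector fields by $\C$-linearity in the argument and the Leibniz rule in the direction. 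Because the $\Gamma^k_{ij}$ are holomorphic and there are no mixed or anti-holomorphic components, $D\JJ = 0$ is immediate, $D$ preserves the type decomposition, and $D$ sends holomorphic vector fields to holomorphic vector fields; torsion-freeness and compatibility with $\inners$ are then the usual index computations, identical to the Riemannian case but with holomorphic entries. Finally, restricting the compatibility identity $d\inner{X,Y} = \inner{D X, Y} + \inner{X, DY}$ to real and imaginary parts shows $D$ is metric for both $g_1$ and $g_2$, and torsion-freeness is a real condition, so by uniqueness of the real Levi-Civita connection $D = D^{(1)} = D^{(2)}$; this simultaneously yields all the assertions in the last sentence of the statement.

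For uniqueness within the holomorphic category: any connection $D$ satisfying \eqref{compatibilita con metrica} and \eqref{torsion free} satisfies in particular the real Koszul formula for $g_1$ (take real parts), so it must coincide with $D^{(1)}$; hence it is unique. I would present the argument in the order: (1) recall the real Levi-Civita connection of $g_1$ and note uniqueness forces any candidate to equal it; (2) build the candidate $D$ in holomorphic coordinates via holomorphic Christoffel symbols and check it is well-defined (independent of the chosen chart, using the holomorphic transition functions and the standard tensorial transformation law of Christoffel symbols); (3) verify \eqref{compatibilita con metrica} and \eqref{torsion free} by the index computation, using $\partial_{\bar z}h_{ij}=0$; (4) deduce $D\JJ=0$ and that $D$ agrees with the Levi-Civita connections of $g_1$ and $g_2$ by taking real and imaginary parts of \eqref{compatibilita con metrica}. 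The only subtlety worth dwelling on is chart-independence of the holomorphically-defined $D$ — but this is the verbatim classical computation with holomorphic Jacobians in place of smooth ones.
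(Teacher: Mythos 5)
Your proof is correct, and it takes a more constructive route than the paper, which simply records the complex Koszul formula (\ref{Levi-Civita}) and asserts that the argument is analogous to the Riemannian one. You rightly isolate the genuine subtlety --- that $D\JJ=0$ is not a formal consequence of compatibility with $Re\inners$ alone (it is precisely the pseudo-K\"ahler condition) and must come from holomorphicity of the metric --- and you supply it by building the connection in holomorphic coordinates with holomorphic Christoffel symbols, so that type-preservation and $D\JJ=0$ hold by construction; the identification with the Levi-Civita connections of $Re\inners$ and $Im\inners$ then follows from classical uniqueness. The paper's route instead defines $D_XY$ globally by the complex Koszul formula, where the hidden point is that the right-hand side is $\C$-linear in $Z$ (one needs $(\JJ Z)\inner{X,Y}=iZ\inner{X,Y}$ on holomorphic frames, i.e.\ holomorphicity of the metric again); your local construction trades that verification for a chart-independence check, which, as you note, is the standard computation with holomorphic Jacobians, and can in any case be bypassed by your own uniqueness argument (each local candidate coincides with the Levi-Civita connection of $Re\inners$ on its chart, hence they patch). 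Two small slips that do not affect the argument: the correct relation between the two real metrics is $Im\inner{X,Y}=-Re\inner{\JJ X,Y}$ (you dropped a sign), and the extension of $D$ should be tensorial in the direction and Leibniz in the argument being differentiated, not the other way around.
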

	
	\begin{proof} The proof is definitely analogous to the one in the Riemannian setting. See \cite{holomorphic riemannian 4}. Explicitly the Levi-Civita connection is defined by 
			\begin{equation}
		\label{Levi-Civita}
		\begin{split}
		\langle D_X Y, Z\rangle= \frac 1 2 \Big( X\inner{Y,Z} + Y\inner{Z,X} - Z\inner{X, Y} + \\
		+\inner{[X,Y],Z} -\inner{[Y,Z],X}+ \inner{[Z,X],Y} \Big).
		\end{split}
		\end{equation}
	\end{proof}

	\begin{Definition}
		Given a holomorphic Riemannian metric $\langle\cdot, \cdot \rangle$ and its Levi-Civita connection $D$ on $T\mathbb M$, define the $(1,3)$-type \emph{curvature tensor} as 
		\[R(X,Y)Z=D_X D_Y Z- D_Y D_X Z - D_{[X,Y]} Z\]
		and the corresponding $(0,4)$-type curvature tensor (that we will still denote with $R$) as
		\[R(X,Y,Z,T)= -\inner{R(X,Y,Z), T}.\]
	\end{Definition}
	
	\begin{Remark}
		Since $D$ is the Levi-Civita connection for $Re\inner{\cdot, \cdot}$ and for $Im\inner{\cdot, \cdot}$, it is easy to check that all of the standard  symmetries of curvature tensors for (the Levi-Civita connections of) pseudo-Riemannian metrics hold for (the Levi-Civita connections of) holomorphic Riemannian metrics, too. So, for instance, 
		\[
		R(X,Y,Z,T)= -R(X, Y, T, Z)= R(Z, T, X, Y)= -R(Z, T, Y, X).
		\]
		Since the $(0,4)$-type $R$ is obviously $\C$-linear on the last component, we conclude that $R$ is $\C$-multilinear.
	\end{Remark}

	\iffalse
	\begin{Remark}	
		\label{curvature tensor Killing}
		Consider the Levi-Civita connection $D$ for the Killing form for a complex semisimple Lie group $G$. Let $[\cdot,\cdot]$ define both the Lie bracket on $T_e G$ and the commutator for vector fields. Then, extending the vectors $V_1, V_2, V_3\in \asl$ respectively to left-invariant vector fields $X, Y, Z$ on $G$, we get 
		\begin{align*}
		R(V_1, V_2) V_3 :=&( D_X D_Y Z - D_Y D_X Z - D_{[X,Y]} Z)_{|_e}=\\
		=& \frac 1 4 \Big( [X, [Y,Z]]- [Y, [X,Z]]\Big)_{|_e}- \frac 1 2 [[X,Y],Z]_{|_e}\\
		=&\frac 1 4 \Big( [X, [Y,Z]]+[Y,[Z,X]] + [Z,[X,Y]] \Big)_{|_e} - \frac 1 4 [[X,Y],Z]_{|_e}=\\
		=& -\frac 1 4 [[V_1,V_2],V_3].
		\end{align*}
	\end{Remark}
	\fi

	\begin{Definition}	A \emph{non-degenerate plane} of $T_p \mathbb M$ is a complex vector subspace $\mathcal V< T_p \mathbb M$ with $dim_\C\mathcal V=2$ and such that $\inner{\cdot, \cdot}_{|\mathcal V}$ is a non degenerate bilinear form.
		
		Given a hRm $\inners$ with curvature $R$, we define the {complex sectional curvature} of a nondegenerate complex plane $\mathcal V= Span_\C (V,W)$ as
		\begin{equation}
		\label{def curvatura}
		K(Span_\C (V,W))=\frac{\inner{R(V,W)W, V}}{\|V\|^2 \|W\|^2 - \inner{V,W}^2}\newline.
		\end{equation}
		This definition of $K(Span_\C (V,W))$ is well-posed because $\inners_{|\mathcal V}$ is non-degenerate and $R$ is $\C$-multilinear.
	\end{Definition}

\section{The holomorphic Riemannian manifold $(\PSL, \frac 1 8 Kill)$}
\label{Section PSL}
	The complex projective special linear group 
	\[
	\PSL=\faktor{ \{A\in Mat(2,\C)\ |\ det(A)=1\} }{\{\pm I_2\}}
	\]
	has a natural structure of complex semisimple Lie group with Lie algebra
	\[\asl=\{M\in Mat(2,\C)\ |\ tr(M)=0\}=T_{I_2}\PSL.
	\] 
	The complex Killing form of $\asl$ is given by
	\[
	Kill(M,N)= 4 tr(M \cdot N)
	\]
	and is in fact a symmetric, non-degenerate, $\C$-bilinear form.
	
As in Example $\ref{Esempio3}$, by pushing forward the Killing form via translations, one gets a global holomorphic Riemannian metric. As a matter of convenience we rescale it and define $\inners=\inners_\PSL \in \Gamma(Sym_2(T^*\PSL) )$ as
\[
\inners_A := \frac 1 8  (L_A)_* Kill= \frac 1 8  (R_A)_* Kill.
\]

The projection $\SL \to \PSL$ is a holomorphic $2$-sheeted covering, in fact a universal covering, and a homomorphism of Lie groups. As a result, the pull-back metric on $\SL$, that we will denote by $\inners_\SL$ or again simply by $\inners$, is a holomorphic Riemannian metric and coincides with the global Killing form on $\SL$ up to a factor.

\begin{Remark}
	\label{Remark automorfismi PSL}
	By construction of $\inners_\PSL$,  $\PSL\times \PSL$ acts by isometries on $\PSL$ through
	\begin{equation}
	\label{automorfismi SL}
	\begin{split}
	{\PSL \times \PSL}\to Isom(\PSL)\\
	(A,B)\cdot C := A\ C\ B^{-1}.
	\end{split}
	\end{equation}  
 Since the center of $\PSL$ is trivial, the action is faithful, hence the map $\eqref{automorfismi SL}$ is injective. 
	
	In fact, one has
	\[
	 \PSL \times \PSL\cong Isom_0(\PSL, \inners).
	\]
	In order to prove it, it is enough to show that the two Lie groups have the same dimension, namely, by injectivity of $(\ref{automorfismi SL})$, that $dim\big(Isom_0(\PSL, \inners)\big)\ge 6.$
	
	Since every isometry is an isomorphism for the Levi-Civita connection, for any $A\in \PSL$ and $\phee \in Isom(\PSL)$ one has that $\phee \circ \exp_A = \exp_{\phee(A)} \circ \phee$, hence, $\PSL$ being connected, the differential map at one point characterizes the isometry: as a result, denoting $Stab(I_2)=\{\phee\in Isom(\PSL)\ |\ \phee(I_2)=I_2 \}$, the map
	\begin{equation*}
	\begin{split}
	Stab(I_2)&\to Isom( T_{I_2} \PSL)\cong O(3,\C)\\
	\phee &\mapsto  d_{I_2}\phee
	\end{split}
	\end{equation*}
	is injective. In conclusion, \[dim_\C(Isom(\PSL))\le dim_\C(O(3,\C)) + dim_\C(\PSL)=6. \] As a result, $dim_\C Isom(\PSL)=6$ and the proof follows.
	
	In a similar way one gets that 
	\[
	Isom_0 (\SL, \inners_\SL)\cong \faktor{\SL \times \SL}{\{\pm(I_2, I_2)\}}
	\]
\end{Remark}

\begin{Proposition}
	\label{Prop PSL}
	Let $D$ be the Levi-Civita connection for $\inners=\inners_\PSL$ and $R$ the associated curvature tensor. 
	\begin{enumerate}
\item If $X$,$Y$ are left-invariant vector fields, then $D_X Y =\frac 1 2 [X,Y]$.
\item For all $V_1 V_2, V_3\in \asl$, $R(V_1,V_2)V_3=-\frac 1 4 [[V_1,V_2],V_3]$.
\item For all $V_1,V_2\in \asl$, $\| [V_1, V_2] \|^2 =-4 \|V_1\|^2 \|V_2\|^2 + 4 \inner{V_1,V_2}^2$.
\item $\PSL$ and $\SL$ have constant sectional curvature $-1$.
\end{enumerate}
\end{Proposition}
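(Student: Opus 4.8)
The plan is to prove the four statements in order, reducing everything to the Lie algebra $\asl$ via left-invariant vector fields and using only the explicit Levi-Civita formula \eqref{Levi-Civita}, the $ad$-invariance of $Kill$ (hence of $\inners$), and the Jacobi identity. For (1): if $X,Y,Z$ are left-invariant then $\inner{X,Y}$, $\inner{Y,Z}$, $\inner{Z,X}$ are constant functions, since $\inners$ is left-invariant by construction, so the first three terms of \eqref{Levi-Civita} vanish. By $ad$-invariance (combined with the symmetry of $\inners$ and the antisymmetry of the bracket) one gets $\inner{[Z,X],Y}=\inner{[Y,Z],X}$, so the last two bracket terms in \eqref{Levi-Civita} cancel, leaving $\inner{D_X Y,Z}=\tfrac12\inner{[X,Y],Z}$; since $Z$ runs over a basis of each tangent space and $\inners$ is non-degenerate, $D_XY=\tfrac12[X,Y]$. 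For (2): extend the $V_i$ to left-invariant fields $X,Y,Z$; by (1) each of $D_YZ$, $D_XZ$, $D_{[X,Y]}Z$ is again half a bracket of left-invariant fields, so $R(X,Y)Z=\tfrac14\big([X,[Y,Z]]-[Y,[X,Z]]\big)-\tfrac12[[X,Y],Z]$, and the Jacobi identity rewrites $[X,[Y,Z]]-[Y,[X,Z]]=[[X,Y],Z]$, giving $R(X,Y)Z=-\tfrac14[[X,Y],Z]$; evaluate at $I_2$.

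For (3) the point is a purely algebraic identity in $2\times 2$ matrices. With $\inner{M,N}=\tfrac18 Kill(M,N)=\tfrac12\,tr(MN)$ on $\asl$, the claim is equivalent to $tr([M,N]^2)=2\big(tr(MN)\big)^2-2\,tr(M^2)\,tr(N^2)$ for traceless $M,N$. Expanding $[M,N]^2=MNMN-MN^2M-NM^2N+NMNM$ and using cyclicity of the trace reduces this to $tr(MNMN)-tr(M^2N^2)=\big(tr(MN)\big)^2-2\,\det M\,\det N$, which follows from Cayley–Hamilton: a traceless $2\times 2$ matrix $A$ satisfies $A^2=-\det(A)\,I_2$, so $tr(M^2N^2)=2\,\det M\,\det N$ and $tr(M^2)=-2\det M$, while $(MN)^2=tr(MN)\,MN-\det(MN)\,I_2$ yields $tr\big((MN)^2\big)=\big(tr(MN)\big)^2-2\,\det M\,\det N$. (Alternatively one can use the isomorphism $\asl\cong(\C^3,\times)$ under which $Kill$ is a multiple of the standard bilinear form and invoke the Lagrange identity, which holds as a polynomial identity.)

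For (4): since left-translations are isometries, $(\PSL,\inners)$ is homogeneous, so it suffices to compute the complex sectional curvature \eqref{def curvatura} of a non-degenerate plane $Span_\C(V,W)\subset\asl=T_{I_2}\PSL$. By (2), $\inner{R(V,W)W,V}=-\tfrac14\inner{[[V,W],W],V}$; writing $[[V,W],W]=-[W,[V,W]]$ and applying $ad$-invariance once gives $\inner{[[V,W],W],V}=-\inner{[V,W],[V,W]}=-\|[V,W]\|^2$, so the numerator in \eqref{def curvatura} equals $\tfrac14\|[V,W]\|^2$. By (3) this is $-\big(\|V\|^2\|W\|^2-\inner{V,W}^2\big)$, exactly minus the denominator; hence $K\equiv-1$. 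Finally $\SL\to\PSL$ is a local isometry, so $\SL$ has the same constant curvature.

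The steps (1), (2), (4) are formal manipulations with bi-invariant data, so the only genuinely computational input is the $2\times 2$ trace identity of (3): it is what converts the Lie-theoretic expression $\|[V,W]\|^2$ into the Gram determinant $\|V\|^2\|W\|^2-\inner{V,W}^2$ appearing in the denominator of \eqref{def curvatura}. Beyond that, the main thing to watch is sign and normalization bookkeeping — the curvature convention in \eqref{def curvatura}, the $ad$-invariance rearrangements, and the rescaling factor $\tfrac18$ — which must combine to give $-1$ rather than $+1$.
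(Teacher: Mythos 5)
Your proof is correct, and parts (1), (2) and (4) follow the same route as the paper: constancy of $\inner{X,Y}$ for left-invariant fields plus $ad$-invariance in the Koszul formula \eqref{Levi-Civita}, the Jacobi identity for the curvature, and the reduction of $K$ to $\frac14\|[V,W]\|^2$ via one application of $ad$-invariance. The genuine difference is in part (3). The paper argues by symmetry: it invokes Remark \ref{Remark automorfismi PSL} to get that $Stab(I_2)\cong SO(3,\C)$ acts transitively on orthonormal pairs in $(\asl,\inners)$, reduces to a single explicit pair, and checks the identity there (this reduction implicitly needs a polarization step, since both sides are quadratic rather than bilinear in each $V_i$). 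You instead verify the identity directly as a trace identity for traceless $2\times 2$ matrices via Cayley--Hamilton ($A^2=-\det(A)I_2$ for $tr A=0$). Your route is more elementary and self-contained — it does not depend on the isometry-group computation of the Remark and avoids the polarization issue — at the cost of being specific to $\mathfrak{sl}(2,\C)$, whereas the paper's symmetry argument is the one that generalizes in spirit. One small slip: your intermediate displayed identity should read $tr\big((MN)^2\big)-tr(M^2N^2)=\big(tr(MN)\big)^2-4\det M\det N$ (equivalently $-tr(M^2)\,tr(N^2)$ on the right), not $-2\det M\det N$; the two Cayley--Hamilton facts you then state, $tr(M^2N^2)=2\det M\det N$ and $tr\big((MN)^2\big)=\big(tr(MN)\big)^2-2\det M\det N$, are correct and do combine to give the required identity, so this is only a typo and not a gap.
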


\begin{proof}
\begin{enumerate}
\item	Let $X, Y, Z$ be left-invariant vector fields for $\PSL$.		
		Then $\inner{X,Y},\inner{Y,Z}$ and $\inner{X,Z}$ are constant functions and $\inner{[Z,Y],X}+\inner{[Z,X],Y}=0$ since the Killing form is $ad$-invariant. In conclusion, by the explicit expression (\ref{Levi-Civita}), we get that 
		\[
		D_X Y =\frac 1 2 [X,Y]
		\]
		for all $X,Y$ left-invariant vector fields. An analogous statement holds for the complex global Killing form of any semi-simple complex Lie group.
\item 
Let $X, Y, Z$ be left-invariant vector fields and let $X_0,Y_0,Z_0\in \asl$ be their value at $I_2$ respectively. Then,
\begin{equation*}
\begin{split}
R(X_{0},Y_{0}) Z_{0} :=&( D_X D_Y Z - D_Y D_X Z - D_{[X,Y]} Z)_{|_{I_2}}=\\
=& \frac 1 4 \Big( [X, [Y,Z]]- [Y, [X,Z]]\Big)_{|_{I_2}}- \frac 1 2 [[X,Y],Z]_{|_{I_2}}\\
=&\frac 1 4 \Big( [X, [Y,Z]]+[Y,[Z,X]] + [Z,[X,Y]] \Big)_{|_{I_2}} - \frac 1 4 [[X,Y],Z]_{|_{I_2}}=\\
=& -\frac 1 4 [[X_{0},Y_{0}],Z_{0}]
		\end{split}
		\end{equation*}
		where the last Lie bracket is the Lie bracket of $\asl$.
\item The expression is $\C$-bilinear on $V$ and $V_2$, so it suffices to show that it holds for orthonormal $V_1$ and $V_2$. By Remark \ref{Remark automorfismi PSL}, the main connected component of the isometries of $\PSL$ that fix $I_2$ is isomorphic to $SO(3,\C)$, so $Stab(I_2)$ acts transitively on (unordered) couples of orthonormal vectors of $(\asl, \inners)$: as a result, one can check the equation on a particular couple of orthonormal vectors of $\PSL$:\[\bigg\| \begin{pmatrix}1 & 0\\
	0 & -1
\end{pmatrix}\bigg\|^2= \bigg\| \begin{pmatrix}0 & 1\\
	1 & 0
\end{pmatrix} \bigg \|^2 =1\] and
 \[\bigg[
\begin{pmatrix}1 & 0\\
0 & -1
\end{pmatrix}, \begin{pmatrix}0 & 1\\
1 & 0
\end{pmatrix}
\bigg] = \begin{pmatrix}0 & 2\\
-2 & 0
\end{pmatrix}\] which has squared norm $-4$.
\item 
Let $V_1, V_2\in \asl$ be orthonormal with respect to $\inners$. Using the previous steps and that the Killing form is $ad$-invariant, one has 
\[
K(Span(V_1,V_2))=\frac 1 4 \inner{[[V_1, V_2], V_1], V_2}=+ \frac 1 4 \inner{[V_1,V_2], [V_1,V_2]}=-1.
\]
The proof follows by homogeneity of $\inners_\PSL$ and $\inners_\SL$ 

	\end{enumerate}
\end{proof}

\section{The space $\GG$ of the geodesics of $\Hy^3$}
\label{Section G}
Let $\partial \Hy^n$ denote the visual boundary at infinity of $\Hy^n$.
For $n=3$, $\partial \Hy^3\cong S^2$ has a natural complex structure for which the trace at infinity of any element in $Isom_0(\PSL)$ is a biholomorphism of the boundary: for $\Hy^3$ in the half-space model, $\partial \Hy^3$ is biholomorphic to $\overline \C$, and the canonical identification $\PSL\cong Isom_0(\Hy^3)$ induces an isomorphism at infinity $\PSL\cong Bihol(\overline \C)$ defined by M\"obius transformations. We will equivalently see $\partial \Hy^3$ as $\overline \C$, $S^2$ or $\CP^1$ with the standard complex structures.

Define an \emph{oriented} (resp \emph{unoriented}) \emph{line} of $\Hy^3$ as a non-constant, maximal, oriented (resp. unoriented), non-parametrized geodesic of $\Hy^3$. Each oriented line of $\Hy^3$ is uniquely identified by the ordered pair of its endpoints at infinity, namely a ordered pair of distinct points in $\partial \Hy^3$. Hence, define
\[
\GG= \{ \text{oriented lines of } \Hy^3 \}= \partial \Hy^3\times \partial \Hy^3 \setminus \Delta.
\]
With a little abuse, we will often refer to $\GG$ as the set of geodesics of $\Hy^3$.

The set $\GG$ inherits a structure of complex manifold from $\partial \Hy^3$. Moreover, the action of $\PSL$ on $\Hy^3$ clearly induces an action on $\GG$: as a matter of fact, it is the diagonal action on $\partial \Hy^3$, namely
\begin{equation}
\begin{split}\PSL\times \GG &\to \GG\\
A \cdot (z_1,z_2)&= (Az_1, Az_2),
\end{split}
\end{equation}
which is clearly an action by biholomorphisms. 

\begin{Proposition}
	There exists a unique holomorphic Riemannian metric $\inners_\GG$ on $\GG$ with constant curvature $-1$ and with the property of being $\PSL$-invariant, more precisely $Isom_0(\GG, \inners_\GG)\cong \PSL$.
	
	Explicitly, if $(U,z)$ is an affine chart for $\CP^1$, $\inners_\GG$ is described in the coordinate chart $(U\times U\setminus \Delta, z\times z)$ as 
	\[\inners_\GG= -\frac{4}{(z_1-z_2)^2} dz_1 dz_2.\]
\end{Proposition}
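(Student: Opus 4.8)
The plan is to write down the metric explicitly, verify it is a well-defined $\PSL$-invariant holomorphic Riemannian metric, compute its sectional curvature, and then identify its isometry group and prove uniqueness. Every point of $\GG=\partial\Hy^3\times\partial\Hy^3\setminus\Delta$ lies in a chart of the form $(U\times U\setminus\Delta,\ z\times z)$, where $U=\CP^1\setminus\{\ast\}$ is an affine chart of $\partial\Hy^3\cong\CP^1$ with the deleted point chosen distinct from the two endpoints of the geodesic, and $z$ is the affine coordinate. In each such chart one simply \emph{defines} $\inners_\GG:=-\frac{4}{(z_1-z_2)^2}\,dz_1\,dz_2$. The transition between two such charts is the \emph{diagonal} action on $U\times U$ of some Möbius transformation $\phee(z)=\frac{az+b}{cz+d}$, and from $\phee(z_1)-\phee(z_2)=\frac{(ad-bc)(z_1-z_2)}{(cz_1+d)(cz_2+d)}$ and $d\phee(z_i)=\frac{(ad-bc)}{(cz_i+d)^2}\,dz_i$ one gets $-\frac{4\,d\phee(z_1)\,d\phee(z_2)}{(\phee(z_1)-\phee(z_2))^2}=-\frac{4\,dz_1\,dz_2}{(z_1-z_2)^2}$, all factors $(ad-bc)$ and $(cz_i+d)$ cancelling; hence $\inners_\GG$ is globally well defined. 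Reading the very same computation with $\phee\in\PSL$ acting diagonally on $\GG$ shows $\inners_\GG$ is $\PSL$-invariant, and since the diagonal $\PSL$-action on $\CP^1\times\CP^1$ is faithful this gives an injection $\PSL\hookrightarrow Isom_0(\GG,\inners_\GG)$. Finally, $\inners_\GG$ is a holomorphic Riemannian metric straight from the chart expression: it is manifestly symmetric, $\C$-bilinear, with holomorphic coefficient $-\frac{4}{(z_1-z_2)^2}$ away from $\Delta$, and non-degenerate, since in the frame $\partial_{z_1},\partial_{z_2}$ it is off-diagonal with nonzero entry $-\frac{2}{(z_1-z_2)^2}$.

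The conceptual point for the curvature is that $\inners_\GG$ is the complexification of the hyperbolic metric of $\Hy^2$: writing $z=x+iy$ and using $z-\bar z=2iy$, the upper half-plane metric $\frac{dx^2+dy^2}{y^2}=\frac{dz\,d\bar z}{y^2}$ equals $-\frac{4}{(z-\bar z)^2}\,dz\,d\bar z$, and promoting $\bar z$ to an independent variable $z_2$ — legitimate since the metric is real-analytic — produces precisely $-\frac{4}{(z_1-z_2)^2}\,dz_1\,dz_2$ on a domain of $\C^2$. As the Christoffel symbols of \eqref{Levi-Civita}, and hence the curvature tensor, are universal rational expressions in the metric coefficients and their derivatives, the curvature of a complexified real-analytic metric is the complexification of its curvature; thus $\inners_\GG$ inherits constant sectional curvature $-1$ from $\Hy^2$. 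Alternatively, since $\dim_\C\GG=2$ there is a single sectional-curvature function $K\colon\GG\to\C$, constant because $\PSL$ acts by isometries transitively on $\GG$; then one need only compute the three nonzero Christoffel symbols of $-\frac{4}{(z_1-z_2)^2}\,dz_1\,dz_2$ in one chart and evaluate $K$ at one point, obtaining $-1$.

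Given the injection $\PSL\hookrightarrow Isom_0(\GG,\inners_\GG)$ and $\dim_\C\PSL=3$, to prove $Isom_0(\GG,\inners_\GG)\cong\PSL$ it suffices to show $\dim_\C Isom_0(\GG,\inners_\GG)\le 3$, by the argument of Remark \ref{Remark automorfismi PSL}: $\GG$ is connected and every isometry intertwines the exponential maps, so an isometry fixing a point $p_0$ with trivial differential there is the identity; hence the differential at $p_0$ embeds the stabilizer of $p_0$ in $Isom_0(\GG)$ into $Isom(T_{p_0}\GG)\cong O(2,\C)$, whence $\dim_\C Isom_0(\GG)\le\dim_\C O(2,\C)+\dim_\C\GG=1+2=3$. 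An injective homomorphism of connected Lie groups of equal dimension is an immersion with open image, hence an isomorphism onto the connected target.

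For uniqueness, note that the diagonal $\PSL$-action on $\GG$ is transitive (indeed $PSL(2,\C)$ is triply transitive on $\partial\Hy^3$), so any $\PSL$-invariant hRm is determined by its value at $p_0$, which must be invariant under the isotropy group $H<\PSL$ of $p_0$ — the group of Möbius maps fixing the two endpoints $z_1^0,z_2^0$ of the corresponding geodesic, isomorphic to $\C^*$. In the chart, such an $h$ acts on $T_{p_0}\GG$ by $(\partial_{z_1},\partial_{z_2})\mapsto(\mu_h\,\partial_{z_1},\mu_h^{-1}\,\partial_{z_2})$, where $\mu_h$ is the multiplier of $h$ at $z_1^0$ (the multipliers at the two fixed points being reciprocal), and $\mu_h$ ranges over all of $\C^*$; hence a symmetric $\C$-bilinear form $a\,dz_1^2+2b\,dz_1\,dz_2+c\,dz_2^2$ is $H$-invariant iff $a=c=0$, so the space of $H$-invariant symmetric forms is the line through $(\inners_\GG)_{p_0}$. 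Therefore every $\PSL$-invariant hRm on $\GG$ equals $\lambda\,\inners_\GG$ for some $\lambda\in\C^*$, and since rescaling a metric by $\lambda$ rescales sectional curvature by $\lambda^{-1}$, constant curvature $-1$ forces $\lambda=1$. I expect the curvature computation of the second step to be the only genuinely computational obstacle; everything else reduces to the Möbius identity above, the dimension bound already used in Remark \ref{Remark automorfismi PSL}, and the elementary isotropy-representation bookkeeping just described.
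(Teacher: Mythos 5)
Your proof is correct and, unlike the paper, self-contained: the paper offers no argument for this proposition at all, deferring entirely to \cite{Articolone}. Each of your four steps holds up. The Möbius identity $\phee(z_1)-\phee(z_2)=\frac{(ad-bc)(z_1-z_2)}{(cz_1+d)(cz_2+d)}$ does simultaneously establish well-definedness across affine charts and $\PSL$-invariance; the complexification argument for the curvature is legitimate (the curvature is a universal rational expression in the metric coefficients and their derivatives, so it analytically continues, and in complex dimension $2$ there is only one sectional curvature to compute); the dimension count $\dim_\C Isom_0(\GG)\le \dim_\C O(2,\C)+\dim_\C\GG=3$ mirrors Remark \ref{Remark automorfismi PSL} faithfully; and the isotropy representation $(\partial_{z_1},\partial_{z_2})\mapsto(\mu\,\partial_{z_1},\mu^{-1}\partial_{z_2})$ of the stabilizer $\C^*$ of an ordered pair of fixed points correctly kills the $dz_1^2$ and $dz_2^2$ components, pinning the invariant form down to a multiple of $dz_1dz_2$, with the scale fixed by $K(\lambda g)=\lambda^{-1}K(g)$. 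A route closer to what this paper actually sets up in the adjacent sections would be to define $\inners_\GG$ as the pull-back under $Rot_\pi\colon\GG\to Q\subset\PSL$ of the metric induced on the quadric $Q=\PSL\cap\Proj(\asl)$, and to deduce constant curvature $-1$ from Proposition \ref{Prop PSL} together with the identification $Q\cong\mathds X_2$ of Section \ref{HR space forms}; that trades your curvature computation for the Lie-theoretic one already done for $\PSL$, at the cost of having to check that $Q$ is a nondegenerate (totally geodesic) submanifold. The only point you might make explicit is the standard fact that an affine transformation of a connected manifold is determined by its value and differential at a single point, which underlies both your dimension bound and the injectivity of $Stab(p_0)\to Isom(T_{p_0}\GG)$; the paper invokes it in Remark \ref{Remark automorfismi PSL} in exactly the same way, so this is consistent rather than a gap.
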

\begin{proof}
	See \cite{Articolone}\S2.
\end{proof}

\subsection*{An interesting isometric immersion of $\GG$ in $\PSL$}
There exists a very intuitive isometric immersion of $\GG$ into $\PSL$.

Consider the complex manifold
\begin{equation}
\begin{split}
Q &= \PSL \cap \Proj (\asl) =\{ M\in \Proj Mat(2,\C) \ |\ tr(M)=0, det(M)=1 \}\subset \\
&\subset \Proj (Mat(2,\C))\cong \CP^3
.
\end{split}
\end{equation}
It is not difficult to remark that
\[
Q=\{M\in \PSL \ |\ M^2 =I_2 \text{ and } M\ne I_2 \}
\] 
via a straighforward computation.

Hence, in the identification $\PSL\cong Isom_0(\PSL)$, $Q$ corresponds to orientation-preserving isometries of $\Hy^3$ of order $2$, namely rotations of angle $\pi$ around an unoriented line of $\Hy^3$.

As a result, one can define a map
\[
Rot_\pi \colon \GG \to Q
\]
that sends $\gamma \in \GG$ into the rotation of angle $\pi$ around $\gamma$: $Rot_\pi$ is clearly a $2$-sheeted covering map. An explicit computation shows that $Rot_\pi$ is holomorphic and a local isometry between the hRm on $\GG$ and the hRm induced on $Q$ as a complex submanifold of $\PSL$ (also see \cite{Articolone}\S2).

\section{$\GG$ and $\PSL$ as holomorphic Riemannian space forms}
\label{HR space forms}

We say that a holomorphic Riemannian manifold $(\mathbb M, \inners)$ is a \emph{holomorphic Riemannian space form} if it has constant sectional curvature, is simply-connected and its Levi-Civita connection is geodesically complete.

Similarly as in the Riemannian case, we have a result about existence and uniqueness of space forms.

	\begin{Theorem}
	\label{Theorem space forms}
	For all $n\in \Z_+$, $n\ge 2$, and $k\in \C$, there exists exactly one holomorphic Riemannian space form of dimension $n$ with constant sectional curvature $k$ up to isometry, namely:
	\begin{itemize}
		\item $(\C^n,\inners_0)$, as in Example \ref{Esempio1}, for $k=0$;
		\item $(\mathds{X}_n, -\frac {1} k\inner{\cdot, \cdot})$, with $\inners$ as in Example \ref{Esempio2}, for $k \in \C^*$.
	\end{itemize}
\end{Theorem}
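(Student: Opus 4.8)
The plan is to mirror the classical Riemannian argument: first establish that the listed models are indeed holomorphic Riemannian space forms, then prove uniqueness via a "development/continuation" argument for the frame bundle, and finally check the curvature normalization. Existence is the easy half. That $(\C^n, \inners_0)$ is flat, simply connected, and complete is immediate since its Levi-Civita connection is the trivial one on $\C^n$ and complex geodesics are affine lines $t \mapsto p + tv$ with $t \in \C$, which are defined for all $t$. For $(\mathds{X}_n, \inners)$, one computes the second fundamental form of $\mathds{X}_n \subset \C^{n+1}$: since $\mathds{X}_n$ is a "complex pseudosphere," the shape operator is (a multiple of) the identity, and the Gauss equation yields constant sectional curvature; rescaling by $-1/k$ multiplies the curvature by $k$ and gives curvature $k$. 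Simple connectedness of $\mathds{X}_n$ for $n \ge 2$ follows because $\mathds{X}_n$ is homotopy equivalent to $TS^n$ (or can be seen directly as a complexification of the sphere), and completeness follows because the complex geodesics are the intersections of $\mathds{X}_n$ with complex $2$-planes through the origin on which $\inners_0$ restricts nondegenerately, parametrized by complex trigonometric functions $t \mapsto \cos(t)\, p + \sin(t)\, v$, which are entire in $t \in \C$.

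For uniqueness, I would argue as follows. Let $(\mathbb{M}, \inners)$ be a holomorphic Riemannian space form of dimension $n$ and constant curvature $k$, and let $(\mathbb{M}_0, \inners_0)$ be the corresponding model from the list. Because the real and imaginary parts of a hRm are pseudo-Riemannian metrics of signature $(n,n)$ sharing the same Levi-Civita connection $D$ (by the Proposition recalled in Section \ref{Section HRM}), and because $D\JJ = 0$, one can run the pseudo-Riemannian Cartan–Ambrose–Hicks theorem in a way compatible with the complex structure: fix base points $p \in \mathbb{M}$, $p_0 \in \mathbb{M}_0$ and a $\C$-linear isometry $L \colon (T_p\mathbb{M}, \inners_p) \to (T_{p_0}\mathbb{M}_0, \inners_{0,p_0})$ with $L \circ \JJ = \JJ_0 \circ L$. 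Since both manifolds have the same constant curvature $k$, the curvature tensors correspond under $L$ (the $(0,4)$ curvature tensor of a constant-curvature holomorphic Riemannian metric is the universal algebraic expression $k(\inners \owedge \inners)$, by the same computation as in the pseudo-Riemannian setting, using $\C$-multilinearity of $R$ as noted in Section \ref{Section HRM}). Geodesic completeness of both connections and simple connectedness then let one integrate $L$ along geodesics to a global connection-preserving diffeomorphism $F \colon \mathbb{M} \to \mathbb{M}_0$; since $F$ preserves the metrics (a connection-and-base-point-preserving map between constant-curvature spaces is an isometry, by comparing parallel transports) and its differential is everywhere $\C$-linear (it conjugates $D$, hence conjugates the parallel $\JJ$), $F$ is a biholomorphic isometry.

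The main obstacle I expect is making the Cartan–Ambrose–Hicks step fully rigorous in the holomorphic category, specifically verifying that the developing map $F$ is not merely an isometry for $Re\inners$ (which is what the pseudo-Riemannian theorem gives directly) but is also holomorphic. The clean way around this is the observation that $d F$ intertwines the Levi-Civita connections, hence intertwines their parallel tensors; since $\JJ$ is $D$-parallel on $\mathbb{M}$ and $\JJ_0$ is $D_0$-parallel on $\mathbb{M}_0$, and $d_pF = L$ intertwines them at the base point, $dF$ intertwines $\JJ$ and $\JJ_0$ everywhere, so $F$ is holomorphic; and a holomorphic map that is an isometry for $Re\inners$ is automatically an isometry for the full $\C$-bilinear $\inners$ by $\C$-bilinearity. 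A secondary point needing care is the nondegeneracy of $\inners_0$ restricted to the relevant complex $2$-planes in the completeness argument for $\mathds{X}_n$, and the bookkeeping for $k \in \C^*$ versus $k = 0$ — but these are routine once the rescaling $\inners \mapsto -\tfrac1k \inners$ is tracked through the Gauss equation. I would also remark that $\mathds{X}_n$ for different nonzero $k$ are pairwise isometric after rescaling, which is why a single model suffices for all $k \in \C^*$; equivalently, $(\mathds{X}_n, -\tfrac1k\inners)$ and $(\sqrt{-k}\,\mathds{X}_n, \inners_0|)$ coincide.
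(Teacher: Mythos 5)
The paper itself offers no proof of this theorem --- it defers entirely to \cite{Articolone}, \S 2 --- so there is nothing internal to compare your plan against; judged on its own, it is essentially the standard argument and I believe it works. Existence is as you say: the induced metric on $\mathds X_n$ has constant curvature $-1$ by the Gauss equation (the position vector has square norm $-1$), completeness holds because the complex geodesics are entire curves ($t\mapsto p+tv$ for null $v$, complex trigonometric combinations of $p$ and $v$ otherwise), and $\mathds X_n$ retracts onto $iS^n$, hence is simply connected for $n\ge 2$. For uniqueness, the reduction to the pseudo-Riemannian Cartan--Ambrose--Hicks theorem for the common Levi-Civita connection of $Re\inners$ and $Im\inners$, followed by the upgrade ``$F$ affine and $\C$-linear at one point $\Rightarrow$ $dF$ intertwines the parallel tensors $\JJ$, $\JJ_0$ everywhere $\Rightarrow$ $F$ holomorphic $\Rightarrow$ $F$ preserves $\inners = Re\inners - i\,Re\inner{\JJ\cdot,\cdot}$'', is correct. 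Two points to tighten. First, a sign: rescaling a metric by $\lambda=-1/k$ divides sectional curvature by $\lambda$, so the curvature of $(\mathds X_n,-\frac 1k \inners)$ is $(-1)/(-1/k)=k$; your phrase ``multiplies the curvature by $k$'' tacitly presumes the unrescaled curvature is $+1$. Second, when you invoke the universal constant-curvature form of the $(0,4)$ tensor $R$ to see that the base-point isometry $L$ matches curvatures, note that the defining identity holds a priori only on nondegenerate planes; since these form a dense open subset of the Grassmannian and both sides are polynomial in $(V,W)$, the identity extends to all pairs, and the usual polarization then recovers the full tensor --- this also gives $DR=0$, which is the hypothesis (local symmetry) actually needed to run the global Cartan--Ambrose--Hicks theorem for the signature-$(n,n)$ metric $Re\inners$.
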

\begin{proof}
	See \cite{Articolone}\S2. 
\end{proof}

As a matter of fact, $(\mathds X_3, \inners)\cong (\SL, \inners)$ and $(\mathds X_2, \inners)\cong (\GG, \inners)$. 
One may explicitly check that $\SL$ and $\GG$ are geodesically complete and deduce the existence of the isometries by Theorem \ref{Theorem space forms}, but let us show that one can actually see the isometries directly.
\begin{itemize}
	\item
	Consider on $Mat(2,\C)$ the non-degenerate quadratic form given by $M\mapsto -det(M)$, which corresponds to the complex bilinear form 
	\[
	\inner{M,N}_{Mat_2}=\frac{1}{2}\bigg(tr(M\cdot N)- tr(M)\cdot tr(N) \bigg).
	\]
	In the identification $T Mat(2,\C)=Mat(2,\C)\times Mat(2,\C)$, this complex bilinear form induces a holomorphic Riemannian metric on $Mat(2,\C)$. Observe that the action of $\SL\times \SL$ on $Mat(2,\C)$ given by $(A,B)\cdot M:= A 
	M B^{-1}$ is by isometries, because it preserves the quadratic form.
	
	Since all the non-degenerate complex bilinear forms on complex vector spaces of the same dimension are isomorphic, there exists a linear isomorphism $F\colon (\C^4,\inners_{\C^4}) \to (Mat(2,\C),\inners_{Mat_2})$, which is of course an isometry of holomorphic Riemannian manifolds: such isometry $F$ restricts to an isometry between $\mathds X_3$ and $SL(2,\C)$ and the restriction of $\inners_{Mat_2}$ to $T_{I_2} \SL= \asl$ coincides by construction with $\frac 1 8 Kill$. 
	By invariance of $\inners_{Mat_2}$ under the action of $\SL \times \SL$, we conclude that the induced metric on $\SL$ coincides with the one defined previously.

	\item By choosing explicitly $F$ as	
	\begin{equation}
	\label{iso C4 Mat(2,C)}
	\begin{split}
	F\colon\C^4 &\to Mat(2,\C)\\
	(z_1, z_2, z_3, z_4)&\mapsto 
	\begin{pmatrix}
	-z_1- iz_4 &  -z_2-iz_3\\
	-z_2 + iz_3 & z_1-iz_4
	\end{pmatrix},
	\end{split}
	\end{equation}
	one can straightforwardly see that the image via $F$ of $\{(z_1, z_2, z_3, 0)\ | \sum_k z_k^2=-1 \}\cong \mathds X_2$ is $\SL \cap \asl$.
	
	Recalling the definition $Q=\PSL \cap \Proj(\asl)$, we have that 
	 both the projection $\SL \cap \asl \to Q$ and $Rot_\pi\colon \GG\to Q$ are isometric universal coverings for $Q$, hence $\mathds X_2\cong \GG$.
\end{itemize}

\section{$\Hy^3$ and $\GG$ as symmetric spaces of $\PSL$}
\label{Section symm spaces}
As we remarked previously, $
Isom_0(\Hy^3)\cong Isom_0(\GG)\cong \PSL.$

In fact, both $\Hy^3$ and $\GG$ can be seen as symmetric spaces (in the sense of affine spaces) associated to $\PSL$:
\begin{itemize}
	\item[a)]  $\Hy^3 \cong \faktor {\PSL}{SU(2)}$, where the symmetry at $0\in \Hy^3\subset \R^3$ in the disk model is given by the map $\underline x\mapsto -\underline x$;\\
	\item [b)]  $\GG=\faktor {\PSL}{SO(2,\C)}$, where the symmetry at a geodesic $\gamma\in \GG$ is given by the rotation of angle $\pi$ around $\gamma$, which is indeed an element of $Isom_0(\GG)$.
\end{itemize}
For a complete survey on symmetric spaces see \cite{Kobayashi-Nomizu 2}.

Our aim in this section is to show that the metrics on $\Hy^3$ and $\GG$ are in fact related to the hRm on $\PSL$ through their structures of symmetric spaces.

\vspace{5mm}

Fix $x_1 \in \Hy^3$ and $x_2 \in \GG$ and, as a matter of convenience, denote $X_1:= \Hy^3$ and $X_2:=\GG$.

For $k=1,2$, define the evaluation map related to $(X_k, x_k)$ as
\begin{equation}
\begin{split}
\beta_k\colon \PSL &\to X_k\\
A &\mapsto A\cdot x_k.
\end{split}
\end{equation}

The two marked symmetric spaces $(X_1,x_1)$ and $(X_2,x_2)$ induce two Cartan decompositions of the Lie algebra $\asl$, namely
\begin{equation}\begin{split}
\text{for $(X_1, x_1)$: } &\quad \asl= \mathfrak u (2) \oplus i \mathfrak u(2)=: \mathfrak h_1 \oplus \mathfrak m_1; \\
\text{for $(X_2, x_2)$: } &\quad \asl= \mathfrak o(2,\C) \oplus (Sym(2,\C)\cap \asl)=: \mathfrak h_2 \oplus \mathfrak m_2.
\end{split}
\end{equation}
We recall the following facts (see \cite{Kobayashi-Nomizu 2} \S X-XI):
\begin{itemize}
	\item $\mathfrak h_i =Lie(Stab(x_i))$, and the $Ad$ action of $Stab(x_i)$ on $\asl$ globally fixes $\mathfrak m_i$;
	\item $[\mathfrak h_i, \mathfrak m_i]\subset \mathfrak m_i$ and $[\mathfrak m_i, \mathfrak m_i]\subset \mathfrak h_i$;
\item the map 
\[
d_{I_2}\beta_i\colon \mathfrak m_i \to T_{x_i}  X_i
\]
is a linear isomorphism. Let us denote $d_{I_2}\beta_i (V)=: V_{x_i}$ for all $V\in \mathfrak m_i$;

\item For all $A\in \PSL$, one has \begin{equation}
\label{eq 1}
\beta_i \circ L_A= A\circ \beta_i.\end{equation}
As a corollary, for all $M\in Stab(x_i)$,\begin{equation}
\label{eq 2}
d_{I_2}\beta_i\circ Ad(A)= d_{x_i} A \circ d_{I_2} \beta_i.
\end{equation}

\item 	\label{Prop curvatura spazio simm}
For all $U, V, W\in \mathfrak m_i$, 
\begin{equation}
\label{eq curvatura spazio simmetrico}
R^{X_i}(U_{x_i}, V_{x_i})W_{x_i}= - [[U,V], W]
\end{equation}
where $R^{X_i}$ is the curvature tensor of $X_i$.
\end{itemize}

Define on $T\PSL$ the distribution $(\mathcal D_i)(A) := (L_A)_* \mathfrak m_i < T_A \PSL$.
\begin{Prop}
	For both $i=1,2$, the restriction to $\mathcal D_i$ of the differential of $\beta_i$ is a linear isometry at each $A\in \PSL$ up to a constant. Namely, for all $A\in \PSL$, 
	\[
	d_A \beta_i \colon (\mathcal D_i(A), 4 \inners) \xrightarrow{\sim} T_{A \cdot x_i} X_i.
	\]
\end{Prop}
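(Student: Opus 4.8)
The statement claims that $d_A\beta_i$, restricted to the distribution $\mathcal D_i$, is an isometry up to the universal constant $4$ — that is, for $u,v \in \mathfrak m_i$ one should have
\[
\big\langle (d_A\beta_i)((L_A)_* u), (d_A\beta_i)((L_A)_* v)\big\rangle_{X_i} = 4\,\langle u,v\rangle_{I_2}
= 4\cdot \tfrac18 Kill(u,v) = \tfrac12 Kill(u,v).
\]
Since everything is $\PSL$-equivariant, the first reduction is to $A=I_2$: by \eqref{eq 1} and its corollary \eqref{eq 2}, $\beta_i\circ L_A = A\circ \beta_i$ intertwines the left-translation on the source with the (isometric, by the previous section) $\PSL$-action on $X_i$, and $(L_A)_*$ carries $\mathfrak m_i$ onto $\mathcal D_i(A)$ isometrically for $\inners_{\PSL}$. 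So it suffices to check that $d_{I_2}\beta_i\colon (\mathfrak m_i, 4\inners) \to T_{x_i}X_i$ is a linear isometry, i.e. that $\langle V_{x_i}, W_{x_i}\rangle_{X_i} = \tfrac12 Kill(V,W)$ for $V,W\in\mathfrak m_i$.

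**Key steps.**
First I would fix $i$ and pick an explicit, convenient base point. For $i=2$ (the case of $\GG$) take $x_2$ to be the geodesic with endpoints $0,\infty$, so $\mathfrak m_2 = Sym(2,\C)\cap\asl = \mathrm{Span}_\C\!\left(\begin{pmatrix}1&0\\0&-1\end{pmatrix},\begin{pmatrix}0&1\\1&0\end{pmatrix}\right)$, and compute $d_{I_2}\beta_2$ on these two generators using the coordinate formula $\inners_\GG = -\frac{4}{(z_1-z_2)^2}dz_1\,dz_2$; one evaluates the one-parameter subgroups $\exp(tV)$ acting on the pair $(z_1,z_2)=(0,\infty)$, differentiates to get the image vectors in $T_{x_2}\GG$, and pairs them with $\inners_\GG$. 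For $i=1$ (the case of $\Hy^3$), take $x_1 = 0$ in the disk model; here $\mathfrak m_1 = i\,\mathfrak u(2)$, and one uses that $\beta_1$ identifies $\mathfrak m_1/\!\sim$ with $T_0\Hy^3$ together with the standard hyperbolic metric, computing the image of a convenient basis (e.g. $\frac{i}{2}\begin{pmatrix}1&0\\0&-1\end{pmatrix}$ etc.) and comparing $\inners_{\Hy^3}$ with $\tfrac12 Kill$. In both cases one checks the identity on one orthonormal (or one well-chosen) pair and then invokes $\C$-bilinearity of both sides together with transitivity of $Ad(Stab(x_i))$ on $\mathfrak m_i$ — exactly as in the proof of Proposition \ref{Prop PSL}(3) — to conclude it everywhere on $\mathfrak m_i$.

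Alternatively, and perhaps more cleanly, one can avoid explicit charts by a curvature argument: both $(X_i, \lambda^{-1}\cdot(\text{metric pulled back via }d\beta_i))$ and $X_i$ with its given hRm have constant curvature $-1$, and the symmetric-space curvature formula \eqref{eq curvatura spazio simmetrico}, $R^{X_i}(U_{x_i},V_{x_i})W_{x_i} = -[[U,V],W]$, matches Proposition \ref{Prop PSL}(2)'s formula $R(V_1,V_2)V_3 = -\tfrac14[[V_1,V_2],V_3]$ after transporting through $d_{I_2}\beta_i$ — and comparing the constants $1$ versus $\tfrac14$ pins down that $d_{I_2}\beta_i$ must scale the metric by a factor whose value, combined with Proposition \ref{Prop PSL}(3)–(4) (which relate $\|[V_1,V_2]\|^2$ to $\|V_1\|^2\|V_2\|^2 - \langle V_1,V_2\rangle^2$ and give sectional curvature $-1$), forces the constant to be $4$.

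**Main obstacle.**
The genuine work is the bookkeeping of that single numerical constant: the factor $4$ has to reconcile the $\tfrac18$ in the normalization of $\inners_{\PSL}$, the $\tfrac14$ in the $\PSL$ curvature formula, the $1$ in the symmetric-space curvature formula \eqref{eq curvatura spazio simmetrico}, and the conventions implicit in $\inners_\GG = -\frac{4}{(z_1-z_2)^2}dz_1dz_2$ and in the disk-model metric on $\Hy^3$. I expect the curvature-comparison route to be the safest, since constant curvature is convention-robust, but one still has to be careful that $[\mathfrak m_i,\mathfrak m_i]\subset\mathfrak h_i$ and $[\mathfrak h_i,\mathfrak m_i]\subset\mathfrak m_i$ are used correctly so that $[[U,V],W]$ lands back in $\mathfrak m_i$ and the comparison of the two $(1,3)$-curvature tensors is literally an identity of endomorphisms of $\mathfrak m_i$. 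Once the constant is correctly extracted at $I_2$, propagation to arbitrary $A$ by the equivariance in the first paragraph is immediate.
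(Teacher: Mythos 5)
Your preferred route --- the curvature comparison --- is essentially the paper's proof: the paper defines the equivariant push-forward metric $g_i:=(\beta_i)_*\inners$, checks it is well posed via $Ad$-invariance, computes its sectional curvature to be $-4$ by combining Proposition \ref{Prop PSL} with the symmetric-space formula \eqref{eq curvatura spazio simmetrico}, and concludes by the uniqueness of the $\PSL$-invariant metric of curvature $-1$ on $X_i$; your constant bookkeeping ($-4/c=-1$, hence $c=4$) is correct. The only substantive step you compress is the well-posedness of the equivariantly defined push-forward (independence of the choice of $A$ with $A\cdot x_i$ fixed), which the paper verifies explicitly using \eqref{eq 2} and $Ad$-invariance of the Killing form.

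One concrete slip in your first (explicit-chart) route: if you take $x_2=(0,\infty)$, its stabilizer is the diagonal torus, so $\mathfrak m_2$ is the space of off-diagonal matrices, \emph{not} $Sym(2,\C)\cap\asl$; the latter is $\mathfrak m_2$ for the base point $(i,-i)$ fixed by $SO(2,\C)$. With your choice the generator $\mathrm{diag}(1,-1)$ lies in $\mathfrak h_2$ and is killed by $d_{I_2}\beta_2$, so the computation as written would degenerate. Either base point works, but the decomposition must match it.
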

\begin{proof}
	Both the Riemannian metric of $\Hy^3$ and the hRm of $\GG$ are uniquely determined by being $\PSL$-invariant metrics (Riemannian and holomophic Riemannian resp.) with constant sectional curvature $-1$. It is therefore enough to show that push-forward bilinear forms $(\beta_{i |\mathcal D_i})_* (\inners)$ define two well-posed, $\PSL$-invariant metrics of constant sectional curvature $-4$.
	
	By standard Lie theory, ${Kill^{\asl}}_{|\mathfrak u(2)}= Kill^{\mathfrak u(2)}$ which is a real negative-definite bilinear form being $U(2)$ compact and semisimple. By $\C$-bilinearity of $\inners=\frac 1 8 Kill^{\asl}$,  $\inners_{|i \mathfrak u(2)}$ is real and positive-definite. 
	
	On the other hand, $\inners_{|\mathfrak m_i}$ is a non-degenerate $\C$-bilinear form with orthonormal basis $\bigg( \begin{pmatrix} 0 & 1\\
	1 &0
	\end{pmatrix},  \begin{pmatrix} 0 & -i\\
	i & 0
	\end{pmatrix} \bigg)$.
	
	As we mentioned, consider on each $X_i$ the metric $g_i$ defined so that for all $A\in \PSL$ and for all $V,W\in \mathfrak m_i$
	\[
	(g_i)_{A\cdot x_i} \bigg((d_{x_i}A) (V_{x_i}), (d_{x_i} A) (W_{x_i})\bigg):= \inner{V, W}_{\asl}.
	\]
	\begin{itemize}
\item The definition of $(g_i)$ is well-posed. Indeed, if $A\cdot x_i=B\cdot x_i$, $(d_{x_i}A) V_{x_i}= (d_{x_i}B) V'_{x_i}$ and $(d_{x_i}A) W_{x_i}=(d_{x_i}B) W'_{x_i}$, then using that $\inners$ is $Ad$-invariant and equation \eqref{eq 2} one has
	\begin{align*}
	(g_i)_{A\cdot x_i} ( (d_{x_i}A) V_{x_i}, (d_{x_i}A) W_{x_i}):=& g_{x_i}( V_{x_i}, W_{x_i})=\\
	=& g_{x_i}\bigg(\big(d_{x_i}(A^{-1}B) \big) V_{x_i}', \big(d_{x_i}(A^{-1}B) \big) W_{x_i}'\bigg)=\\
	=& \inner{Ad(A^{-1}B) V',Ad(A^{-1}B) W'}_\SL= \inner{V', W'}_\SL=\\
	=& (g_i)_{B\cdot x_i} ((d_{x_i}B) V'_{x_i}, (d_{x_i}B) W'_{x_i}).
	\end{align*}
	Since $\PSL$ acts on $X_i$ transitively and by diffeomorphisms, $g_i$ is uniquely defined.
	
	By construction $\PSL$ acts by isometries on both $g_1$ and $g_2$.  
	
\item The metric $(g_i)_{A\cdot x_i}$ is the push-forward via $\beta_i$ of $\inners_{|\mathcal D_i}$, i.e. \[d_A\beta_i\colon (D_i (A), \inners_\SL) \to (T_{A\cdot x_i} X_i, g_i)\] is a linear isometry for all $A$. Indeed, by equation $\eqref{eq 2}$, for all $V, W\in \mathfrak m_i$,
\begin{equation}
\begin{split}
&(g_i)_{A\cdot x_i} \big( (d_A\beta_i)(d_{I_2} L_A) (V),(d_A\beta_i)(d_{I_2} L_A) (W)\big)=\\
&=(g_i)_{A\cdot x_i} \big( (d_{x_i}A) (d_{I_2}\beta_i)(V),(d_{x_i}A) (d_{I_2}\beta_i) (W)\big) =\\
&= \inner{V, W}_\SL = \inner{(d_{I_2} L_A)(V), (d_{I_2} L_A)(W) }_\SL.
\end{split}
\end{equation}

\item Since $\beta_1$ is smooth and $\beta_2$ is holomorphic, one can easily see that $g_1$ is Riemannian and $g_2$ is holomorphic Riemannian.

\item We compute the sectional curvature of $g_i$.  Let $V, W\in \mathfrak m_i$ be orthonormal with respect to $\inners$, then by Proposition \ref{Prop PSL} and equation \eqref{eq curvatura spazio simmetrico} we have
\begin{align*}
&-1= K(Span_\C (V, W))= \inner{R^{\PSL}(V, W) W, V}= -\frac 1 4 \inner{  [[V,W], W], V}= \\
& =- \frac 1 4 \inner{R^{X_i} (V_{x_i}, W_{x_i})W_{x_i}, V_{x_i} }= \frac 1 4 K^{X_i} (Span_\C (V_{x_i},W_{x_i}) ).
\end{align*}
We conclude that $(X_i, g_i)$ has constant sectional curvature $-4$ and the proof follows.
\end{itemize}

\end{proof}

\end{document}